\newtheorem{theorem}{Theorem}[section] 
\newtheorem{result}{Result}
\newcommand{\pseudoconcat}{\odot}
\newtheorem{proposition}[theorem]{Proposition}
\newtheorem{corollary}[theorem]{Corollary}
\newtheorem{lemma}[theorem]{Lemma}
\newtheorem*{mainresult}{Main Result}
\theoremstyle{definition}
\newcommand{\PP}{{\mathbb P}}
\newcommand{\seq}[1]{\mathbf{#1}}
\newcommand{\reverse}[1]{{{#1}^{*}}}
\newcommand{\V}{\ensuremath{\mathcal{L}}}
\newcommand{\x}{\mathbf{x}}
\newcommand{\concat}{\cdot}
\newcommand{\largo}[1]{\ell({#1})}
\begin{document}
%\setpagewiselinenumbers
%\linenumbers

% \title[short text for running head]{full title}
%\title[A new polynomial for compositions of integers]{A new polynomial for compositions of integers: On distinguishing caterpillars 
%from their symmetric chromatic function}

\title[Proper caterpillars]{Proper caterpillars are distinguished by their symmetric chromatic function}

%    Only \author and \address are required; other information is
%    optional.  Remove any unused author tags.

%    author one information
% \author[short version for running head]{name for top of paper}
\author{Jos\'e Aliste-Prieto and Jos\'e Zamora}
\address{Jos\'e Aliste Prieto\\
	 Centro de Modelamiento Matematico\\
	 Universidad de Chile\\
	 Blanco Encalada 2120 7to. piso, Santiago\\
	 Chile}
\curraddr{Departamento de Matem\'aticas\\
	 Universidad Andres Bello\\
	 Republica 220, Santiago\\
	 Chile}
\email{jose.aliste@unab.cl}
\address{Jos\'e Zamora\\ Departamento de Matem\'aticas\\
        Universidad Andres Bello\\
        Republica 220, Santiago\\
	Chile}
\email{josezamora@unab.cl}
%% Note the doubled @@:

\thanks{J.A.-P. is supported by Fondecyt Postdoctoral grant 3100097 of CONICYT Chile.}

\begin{abstract}
This paper deals with the so-called Stanley conjecture, which asks whether they are non-isomorphic trees with the same symmetric function generalization of the chromatic polynomial. 
By establishing a correspondence between caterpillars trees and integer compositions, we prove that caterpillars in a large class (we call trees in this class proper) have the same symmetric chromatic function generalization of the chromatic polynomial if and only if they are isomorphic. 
\end{abstract}
\maketitle
\section{Introduction}

The weighted graph polynomial $U_G$ \cite{Noble99} and the symmetric chromatic function $X_G$ \cite{Stanley95} of a graph $G$ are powerful invariants. They have been actively studied and have diverse applications as they encode much of the combinatorics of the given graph. In particular, many well-known isomorphism invariants such as the Tutte polynomial and the chromatic polynomial can be obtained as evaluations of them. A natural question about either $X_G$ or $U_G$ is to decide whether they are complete isomorphism invariants. More precisely, Do there exist non-isomorphic graphs with the same symmetric chromatic function (resp. the same weighted graph polynomial)? The answer to both questions is affirmative: Examples of non-isomorphic graphs with the same symmetric
chromatic function can be found in \cite{Stanley95}; on the other hand, one can find non-isomorphic graphs with the same weighted graph polynomial combining the work in \cite{Sarmiento00,Bry81}. However, these questions remain open when restricted to trees. In fact, they are equivalent, due to the fact that $X_G$ and $U_G$ can be recovered one from each other when the graph $G$ is a tree (see \cite[Theorem 6.1]{Noble99}). So the question stands: \emph{Do there exist non-isomorphic trees with the same symmetric chromatic function?}. This question is often referred to as \emph{Stanley's question} or \emph{Stanley conjecture} \cite{Stanley95}. 

Despite of the importance of the symmetric function generalization of the chromatic polynomial, not much is known in the literature about this question. We now review some known partial results towards a solution that appeared in \cite{Martin08}. First, we need to recall some definitions. Given a class of trees, we say that $X_G$ distinguishes among this class if trees in the class with the same symmetric chromatic function must be isomorphic.  A \emph{caterpillar} is a tree
where all the internal edges form a path, which is referred to as the \emph{spine} of the caterpillar. A caterpillar is \emph{proper} if 
each vertex in the spine is adjacent to a least one leaf. The spine  induces a linear structure, which 
%agh:  allow to define -> allows us to define (el objeto indirecto no es opcional)
allows us to define the \emph{leaf-sequence} of a caterpillar: To each vertex in the spine, we associate the number of leaves adjacent to it. In \cite{Martin08}, it is shown that $X_G$ distinguishes among caterpillars with a palindromic leaf-sequence and among proper caterpillars having a leaf-sequence with all its components being distinct. Thus, determining whether $X_G$ distinguishes among all caterpillars seems to be a natural step towards the solution of Stanley's question. In this paper, we obtain the following:

\begin{mainresult}
The symmetric chromatic function distinguishes among proper caterpillars. 
\end{mainresult}

To this purpose, we give a sufficient condition for caterpillars to have a distinct symmetric function generalization of the chromatic polynomial. Next, we describe a natural embedding of proper caterpillars into the set of integer compositions. We introduce a polynomial for compositions, which we call the $\V$-polynomial, that mimics the weighted graph polynomial of Noble and Welsh. We also show that the $\V$-polynomial of an integer composition can be computed as an evaluation of the weighted graph polynomial of the corresponding proper caterpillar. 
Finally, we observe that $\V$-polynomial is equivalent to the multiset of partition coarsenings defined by Billera, Thomas and van Willigenburg in \cite{Billera}, and then we combine  their characterization of integer compositions having the same multiset of partition coarsenings with our sufficient condition to establish our main result. 

This paper is organized as follows: In Section 2, we study the correspondence between caterpillars and integer compositions. In Section 3.1, we review the results of \cite{Billera} needed for the proof of our main result, while in Section 3.2 we establish our sufficient condition, and finally in Section 3.3 we combine these and give the proof of our main result. 

\begin{comment}
More precisely, given a caterpillar $T$, we remove the edges in the spine, and then, for each vertex in the spine, we count the number of vertices in its connected component. Finally, we make a list of this numbers. If the caterpillar has $n$ vertices, then this list can be naturally interpreted as a composition $\alpha$ of $n$. Next, we introduce a polynomial on compositions, which we call the $\V$-polynomial, that has two key features: On the one hand, it is equivalent to the multiset of partition coarsenings of $\alpha$. On the other hand, if $T$ is proper, then $\V$ can be seen as an evaluation of $U_T$.

Thus, by results of \cite{Billera}, the set of compositions that have the same $\V$-polynomial as $\alpha$, or $\V$-class of $\alpha$, is described algrebraiclly as follows: There is an operation on compositions such that each composition admits a unique factorization into irreducible compositions. Second, every composition in the $\V$-class of $\beta$ can be obtained from $\alpha$ by reversing some of its irreducible factors. Finally, we find a sufficient condition (in terms of the corresponding compositions) to ensure that two  non-isomorphic caterpillars have distinct $U$-polynomials. Combining all of this, yields the result.
\end{comment}
\section{Caterpillars versus compositions}

\subsection{Compositions and the $\V$-polynomial}
Let $\PP$ denote the set of positive integers. Let $n$ be a positive integer. A composition $\beta$ of $n$, denoted $\beta\models n$, is a list $\beta_1\beta_2\ldots\beta_k$ of positive integers such that $\sum_i\beta_i = n$. We refer to each of the $\beta_i$ as components, and say that $\beta$ has \emph{length} $\largo{\beta}=k$ and \emph{size} $|\beta|=n$. The set of all compositions of $n$ will be denoted by $\mathcal{C}_n$. The set of all compositions is given by 
\[\mathcal{C}=\bigcup_{n\in\PP}\mathcal{C}_n,\]
and is equal to the set of all non-empty words with alphabet $\PP$. The \emph{reverse} of a composition $\beta=\beta_1\beta_2\ldots\beta_k$ is the composition  $\reverse{\beta} = \beta_k\ldots \beta_2\beta_1$. A composition $\beta$ is a \emph{palindrome} if and only if $\reverse{\beta} = \beta$. We say that  $\alpha\sim_* \beta$  if either $\alpha=\beta$ or $\alpha=\reverse{\beta}$ and denote by $[\beta]^*:=\{\beta,\reverse{\beta}\}$ the corresponding \emph{reverse-class}.

Given two compositions  $\alpha=\alpha_1\alpha_2\ldots\alpha_k$ and  $\beta=\beta_1\beta_2,\ldots\beta_l$, recall that the \emph{concatenation}  is given by  
\[\alpha_1\alpha_2\cdots\alpha_k\concat \beta_1\beta_2\cdots\beta_l = \alpha_1\alpha_2\cdots\alpha_k\beta_1\beta_2\cdots\beta_l,\]
and that the \emph{near-concatenation} of $\alpha$ and $\beta$ is given by 
\[\alpha\pseudoconcat\beta:= \alpha_1 \alpha_2 \ldots \alpha_{k-1} (\alpha_{k}+\beta_1) \beta_2 \beta_3\ldots \beta_l.\]

Next, recall the following partial order on compositions. Given two compositions $\alpha$ and $\beta$ in $\mathcal{C}$,  we say that $\beta$ is a \emph{coarsening} of $\alpha$, denoted $\beta\succeq\alpha$ if $\beta$ can be obtained from $\alpha$ by adding consecutive components from $\alpha$. That is to say, there exists  an increasing finite sequence $1=j_0<j_1 < j_2 < ... < j_{i} < j_{i+1}=\largo{\alpha}+1$ of  integer indices such that 
\[\beta = \alpha_{j_0}\cdots\alpha_{j_{1}-1}\pseudoconcat\alpha_{j_1}\cdots\alpha_{j_{2}-1}\pseudoconcat \ldots\pseudoconcat \alpha_{j_i}\cdots\alpha_{j_{i+1}-1}.\]
For convenience, we will denote $\beta_{i,k}=\beta_i\beta_{i+1}\cdots\beta_{k}$ to shorten the above notation.

Observe that by a well-known result of McMahon \cite{macmahon},
 $(\mathcal{C}_n,\preceq)$ is isomorphic as a poset to the Boolean poset of dimension $n-1$, \emph{i.e.}, the set of all subsets of $\{1,2,\ldots,n-1\}$ ordered by inclusion.
%agh:  moved usually to between are and introduced

A \emph{partition} of $n$  is a composition $\lambda$ of $n$ where the components satisfy $\lambda_1\geq\lambda_2\geq\ldots\geq \lambda_l$. The \emph{type} of a composition  $\beta$, denoted by $\lambda(\beta)$, is the partition obtained by reordering the components of $\beta$ in a weakly decreasing way.

Let $\x=x_1,x_2,\ldots$ be an infinite collection of commuting indeterminates. Given a partition $\lambda=\lambda_1\lambda_2\cdots\lambda_l$ of $n$, define $\x_\lambda:=x_{\lambda_1}x_{\lambda_2}\cdots x_{\lambda_l}$.  The \emph{composition-lattice} polynomial of a composition $\beta$ is defined by 
\[\V(\beta,\x) = \sum_{\alpha\succeq \beta} \x_{\lambda(\alpha)}.\] 

If $P$ is any polynomial in $\x$, and $\lambda$ is a partition, then $[\x_{\lambda}]P$ will denote the coefficient of $\x_{\lambda}$ when $P$ is expanded in the standard monomial basis.

\subsection{The weighted graph polynomial}
\label{sec.applications}

Let $G=(V,E)$ be a simple graph. If $A\subseteq E$, then   $G|_A$  is the graph obtained from $G$ after deleting all the edges in the complement of $A$ from $G$ (but keeping all the vertices). We recall the definition of the weighted graph polynomial (a.k.a. the $U$-polynomial), originally introduced by Noble and Welsh \cite{Noble99}. Note that we give  the definition only for simple graphs (it is possible to  define the $U$-polynomial  for graphs with loops and parallel edges but we will not need this generality here).

 The rank of $A$, denoted $r(A)$, is given by 
\[
	r(A) = |V| - k(G|_A),  
\] 
where $k(G|_A)$ denotes the number of connected components of $G|_A$. Let
$\lambda(A) = \lambda_1\lambda_2\cdots\lambda_k$ be the partition of $|V|$ induced by the connected
components of $G|_A$, that is, $\lambda_1,\lambda_2,\ldots,\lambda_k$ are the cardinalities of 
the connected components of $G|_A$. The \emph{$U$-polynomial} of $G$ is defined by:
\[U_G(\x,y)=\sum_{A \subseteq E} \x_{\lambda(A)}(y-1)^{|A| - r(A)} .\]

A graph  $G$ is \emph{$U$-unique} if the $U$-polynomial of every graph that is not isomorphic to $G$ is different from the  $U$-polynomial of $G$.

When $G$ is a tree $T$, it is easy to check that $r(A) = |A|$. Thus, the $U$-polynomial of $T$ reads 
\[U_T(\x)=\sum_{A \subseteq E} \x_{\lambda(A)}.\]
This implies, in particular, that a tree $T$ is $U$-unique if and only if the $U$-polynomial of every \emph{tree} that is not isomorphic to $T$ is different from the $U$-polynomial of $T$.

Alternatively, by associating monomials, we get
\[ U_T(\x)=\sum_{\lambda \vdash |V|} c_\lambda(T) \x_\lambda,\]
where $c_\lambda(T)$ denotes the 
the number of subsets $A\subseteq E$ such that $\lambda(A) = \lambda$, and the sum is over all the partitions of $|V|$.

\subsection{Caterpillars and the $U^L$-polynomial}

Recall that a  tree $T$ is a \emph{caterpillar} if the induced subgraph on the internal
vertices is a non-trivial path $P(T)$, which is called the \emph{spine} of $T$. 
As it is usual, we will identify $P(T)$ with its set of edges and let $L(T)=E\setminus P(T)$ be the set of leaves-edges of $T$.  
A caterpillar $T$ is \emph{proper} if every internal vertex of $T$ is adjacent to at least one leaf. 
 
% Recall that if $A\subseteq E(T)$, then $\lambda(A)$ is the partition induced by the connected components of $T|_A$. 
The \emph{restricted weighted polynomial}, or $U^L$-polynomial, of a caterpillar $T$ 
is defined by 
\[ U^L_T(\x) = \sum_{A \subseteq E(T), L(T)\subseteq A} \x_{\lambda({A})}.\]

\begin{proposition}\label{l:ugorrofromu}
For every caterpillar $T$, we have
\begin{equation}
\label{eq:UevalUL}
U_T(x_1=0,x_2,x_3,\ldots) = U^L_T(x_1 = 0,x_2,x_3,\ldots).
\end{equation}
Furthermore, if $T$ is proper, then $U^L$ does not depend on $x_1$. In particular, in such case $U_T^L$
is an evaluation of the $U$-polynomial of $T$.
\end{proposition}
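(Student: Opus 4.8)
The plan is to attack the three assertions in order, exploiting the fact that for a tree the $U$-polynomial is just the partition-type generating function over all edge subsets. For the first identity \eqref{eq:UevalUL}, I would compare the two sums term by term after setting $x_1=0$. On the left we sum $\x_{\lambda(A)}$ over all $A\subseteq E(T)$, and on the right over those $A$ containing every leaf-edge. Setting $x_1=0$ kills precisely the monomials $\x_{\lambda(A)}$ in which $\lambda(A)$ has a part equal to $1$, i.e.\ in which $G|_A$ has an isolated vertex. So the plan is to show that the surviving subsets coincide: if $A$ does not contain some leaf-edge $e=\{u,v\}$ with $v$ a leaf, then $v$ is isolated in $G|_A$ (its only incident edge is $e$), hence $\lambda(A)$ has a part $1$ and the term dies after $x_1=0$. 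Thus every surviving term on the left is in fact indexed by a set containing $L(T)$, giving $\le$; the reverse inclusion is trivial since $\{A : L(T)\subseteq A\}\subseteq\{A\subseteq E(T)\}$. This yields \eqref{eq:UevalUL}.

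For the second assertion, assume $T$ is proper, fix $A\supseteq L(T)$, and I want to show $\lambda(A)$ never has a part equal to $1$, so that $U^L_T$, viewed in the monomial basis $\{\x_\lambda\}$, involves no $x_1$ at all — hence does not depend on $x_1$. The key step: a part $1$ in $\lambda(A)$ means some vertex $w$ is isolated in $G|_A$. A leaf $w$ cannot be isolated since its leaf-edge lies in $L(T)\subseteq A$. An internal vertex $w$ lies on the spine and, by properness, is adjacent to at least one leaf $z$; the edge $\{w,z\}\in L(T)\subseteq A$, so again $w$ is not isolated. Hence no isolated vertices occur, $[\x_\lambda]U^L_T=0$ whenever $\lambda$ has a part $1$, and $U^L_T$ is a polynomial in $x_2,x_3,\ldots$ only.

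Finally, combining the two parts: since $U^L_T$ does not involve $x_1$ when $T$ is proper, we have $U^L_T(\x) = U^L_T(x_1=0,x_2,x_3,\ldots)$, which by \eqref{eq:UevalUL} equals $U_T(x_1=0,x_2,x_3,\ldots)$; this exhibits $U^L_T$ as the evaluation of $U_T$ at $x_1=0$, proving the ``in particular'' clause. I do not expect a serious obstacle here; the only point requiring care is the bookkeeping in the first part — making sure that ``$x_1=0$ annihilates exactly the monomials whose partition has a $1$'' is stated cleanly (a monomial $\x_\lambda$ survives the substitution iff no $\lambda_i=1$), and that the leaf-isolation argument is applied to the right endpoint of the omitted leaf-edge. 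Everything else is a direct term-by-term comparison of finite sums.
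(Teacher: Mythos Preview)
Your proposal is correct and follows essentially the same route as the paper: both argue that omitting a leaf-edge isolates its leaf (so $x_1$ divides the corresponding monomial), and that for proper $T$ any $A\supseteq L(T)$ leaves no vertex isolated, whence $U^L_T$ is free of $x_1$. Your write-up is slightly more explicit in the second step (treating leaves and spine vertices separately), but the argument is the same.
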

\begin{proof}
Let $A\subseteq E$ be such that $L(T)$ is not a subset of $A$, and pick an edge $e\in L(T)$ that is also in the complement of $A$. Observe that the leaf adjacent to $e$ is an isolated vertex in $G|_A$. This implies that $1$ is a part of $\lambda(A)$, which means that  $x_1$ divides $\x_{\lambda(A)}$. It follows that $\x_{\lambda(A)}|_{x_1=0} = 0$. Thus,   
\[U_T(x_1=0,x_2,x_3,\ldots) = \sum_{A\subseteq E, L(T)\subseteq A} x(\lambda_A)|_{x_1 = 0} = U_T^L(x_1=0,x_2,x_3,\ldots),\]
which establishes \eqref{eq:UevalUL}. To get the conclusion, observe that if $T$ is proper and $A\subseteq E(T)$ contains $L(T)$, then $T|_A$ does not have isolated vertices, which effectively means that $U_T^L$ does not depend on $x_1$. Hence, the last assertion follows from \eqref{eq:UevalUL}.

\end{proof}

\subsection{Caterpillars versus compositions}
Let $\mathcal{T}_+$ be the family of all proper caterpillars and $\mathcal{P}$ be the set of reverse-classes of all integer compositions. There is a natural embedding of $\mathcal{T}_+$ into $\mathcal{P}$. Indeed, suppose that the internal vertices of $T$ are enumerated as $\{v_1,v_2,\ldots v_k\}$, where 
$v_i$ is adjacent to $v_{i+1}$ for each $i\in\{1,\ldots,k-1\}$. In other words, the spine of $T$ is the path $P(T)=v_1 v_2\ldots v_k$. Then, for each $i\in\{1,\ldots,k\}$, define $\beta_i$ to be  the number of vertices in the connected component of $T|_{L(T)}$ that contains $v_i$. Finally set 
\[\Phi(T) = [\beta_1 \beta_2\cdots \beta_k]^*.\] 

\begin{lemma}
\label{lema.VimplicaU}
The map $\Phi:\mathcal{T}_+\rightarrow\mathcal{P}$ is one-to-one.
\end{lemma}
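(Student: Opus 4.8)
The plan is to prove the stronger statement that a proper caterpillar is reconstructible up to isomorphism from the reverse-class $\Phi(T)$, which immediately gives injectivity. The first step is to record what the numbers $\beta_i$ mean. Fix an enumeration $P(T)=v_1v_2\cdots v_k$ of the spine. Every edge joining $v_i$ to a leaf lies in $L(T)$ (it is not a spine edge), while in $T|_{L(T)}$ each leaf has degree $1$; hence the connected component of $T|_{L(T)}$ containing $v_i$ is exactly $\{v_i\}$ together with the leaves adjacent to $v_i$, so $\beta_i = 1+d_i$, where $d_i$ is the number of leaves adjacent to $v_i$. Since $T$ is proper, $d_i\geq 1$, so every component of $\beta=\beta_1\cdots\beta_k$ satisfies $\beta_i\geq 2$. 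I would also note in passing that the only other Hamiltonian ordering of the path $P(T)$ is $v_kv_{k-1}\cdots v_1$, which replaces $\beta$ by $\reverse{\beta}$; thus $\Phi$ is a well-defined function into $\mathcal{P}$.

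Next I would build an explicit inverse on the image. Given a composition $\beta=\beta_1\cdots\beta_k$ with $k\geq 2$ and all parts at least $2$, let $T(\beta)$ be the tree obtained from a path $w_1w_2\cdots w_k$ by attaching $\beta_i-1$ new pendant vertices to $w_i$. Each $w_i$ then has degree at least $2$ (a spine neighbour and at least one pendant vertex) while the attached vertices have degree $1$, so the internal vertices of $T(\beta)$ are precisely $w_1,\dots,w_k$ and they induce the non-trivial path $w_1\cdots w_k$; hence $T(\beta)$ is a proper caterpillar with spine $w_1\cdots w_k$, and by the first step $\Phi(T(\beta))=[\beta]^{*}$. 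Conversely, if $T$ is a proper caterpillar with $\Phi(T)=[\beta]^{*}$, then the composition read off from some enumeration of $P(T)$ is $\beta$ or $\reverse{\beta}$; after possibly using the reversed enumeration we may assume it is $\beta$, so $v_i$ has $\beta_i-1$ leaf neighbours for each $i$. Sending $v_i\mapsto w_i$ and matching, for each $i$, the leaves at $v_i$ bijectively with the leaves at $w_i$ sends spine edges to spine edges and leaf edges to leaf edges, hence is a graph isomorphism $T\to T(\beta)$. Combining the two directions: if $\Phi(T)=\Phi(T')=[\beta]^{*}$, then $T\cong T(\beta)\cong T'$, so $\Phi$ is one-to-one.

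I do not expect a serious obstacle here; the only points that require care are the two facts that make the reconstruction legitimate: that the decomposition $E(T)=P(T)\sqcup L(T)$ is intrinsic to $T$ (which it is, since the internal vertices are exactly the vertices of degree at least $2$, so the spine and the leaf set do not depend on any choices), and that \emph{properness} is precisely what forces $\beta_i\geq 2$ for \emph{all} $i$ — this is what guarantees that $T(\beta)$ has $w_1\cdots w_k$ as its actual spine, rather than a strictly shorter path obtained by absorbing leafless ends or interior vertices, which is exactly where the argument would break down for arbitrary caterpillars.
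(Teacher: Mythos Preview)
Your proof is correct and follows essentially the same route as the paper: both construct an explicit inverse $\Psi$ (your $T(\beta)$) by starting from a path on $\largo{\beta}$ vertices and attaching $\beta_i-1$ leaves at the $i$-th vertex, then argue that properness forces $\beta_i\geq 2$ so that the resulting tree is a proper caterpillar with the intended spine. You are in fact more careful than the paper in spelling out why $\Phi$ is well-defined, why $\beta_i=1+d_i$, and why the reconstructed tree has exactly $w_1\cdots w_k$ as its spine.
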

\begin{proof}
Let $\mathcal{P}_+$ be the image of $\mathcal{T}_+$ by $\Phi$. We construct $\Psi:\mathcal{P}_+\rightarrow\mathcal{T}_+$, which is the inverse of $\Phi$, explicitly. Given $[\beta]^*$ in $\mathcal{P}^+$, let $\tilde{T}$ be a path with $\largo{\beta}$ vertices, that is, $\tilde{T}=v_1\ldots v_{\ell(\beta)}$. Next, for every $i\in\{1,\ldots,\largo{\beta}\}$, we attach $\beta_i - 1$ leaves to the vertex $v_i$ and denote by $T=\Psi(\beta)$ the caterpillar generated by  this process. It is clear that $T$ does not depend on the choice of $\beta$ in the reverse-class. Moreover, since $[\beta]^*$ belongs to $\mathcal{P}^+$, it is clear that $\beta_i>1$ for all $i$, which means that $T$ is proper. Hence, $\Psi$ is well-defined. Finally, it is direct to check that $\Psi$ is the inverse of $\Phi$. 
\end{proof}

Observe that since the $\V$-polynomials of a composition and its reverse coincide, we can define 
\[\V(\Phi(T),\x) = \V(\beta,\x),\qquad \beta\in\Phi(T).\]
\begin{proposition}
\label{ULequalsV}
For every $T\in\mathcal{T}_+$ we have 
\[U_L^T(\x) = \V(\Phi(T),\x).\]
\end{proposition}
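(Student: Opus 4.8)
The plan is to set up an explicit bijection between the subsets $A \subseteq E(T)$ containing $L(T)$ and the coarsenings of a fixed representative $\beta \in \Phi(T)$, in such a way that the partition $\lambda(A)$ induced by the connected components of $T|_A$ equals $\lambda(\gamma)$, where $\gamma$ is the corresponding coarsening. Once this bijection is in hand, comparing the defining sums
\[
U^L_T(\x) = \sum_{A\subseteq E(T),\, L(T)\subseteq A} \x_{\lambda(A)}
\qquad\text{and}\qquad
\V(\beta,\x) = \sum_{\gamma\succeq\beta} \x_{\lambda(\gamma)}
\]
gives the result immediately (noting, as the excerpt already observes, that $\V(\Phi(T),\x)$ is well defined because reversing $\beta$ does not change the $\V$-polynomial).

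First I would fix the labeling $P(T) = v_1 v_2 \cdots v_k$ of the spine and the composition $\beta = \beta_1\beta_2\cdots\beta_k$ with $\beta_i$ the size of the connected component of $v_i$ in $T|_{L(T)}$, so that $\sum_i \beta_i = |V(T)| = n$. Since $L(T) \subseteq A$ always, the only freedom in choosing $A$ is which of the $k-1$ spine edges $e_i = v_i v_{i+1}$ to include; write $S = \{\, i : e_i \in A\,\} \subseteq \{1,\ldots,k-1\}$. Then the connected components of $T|_A$ are obtained by merging consecutive blocks of $\beta$ exactly across the indices in $S$: if we cut $\{1,\ldots,k\}$ into maximal runs at the positions not in $S$, a run $\{a, a+1, \ldots, b\}$ contributes a component of size $\beta_a + \beta_{a+1} + \cdots + \beta_b$. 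Recalling that the coarsening relation $\gamma \succeq \beta$ is precisely ``$\gamma$ is obtained from $\beta$ by adding consecutive components,'' this is a manifest bijection $S \leftrightarrow \gamma$, and the multiset of component sizes of $T|_A$ is exactly the multiset of components of $\gamma$, i.e.\ $\lambda(A) = \lambda(\gamma)$. I would use here the poset fact quoted after the definition of coarsening (compositions of $n$ coarsening-order-isomorphic to the Boolean lattice on $\{1,\ldots,n-1\}$) only as a sanity check that both sides are indexed by subsets of an $(k-1)$-element set; the bijection itself is the direct combinatorial correspondence just described.

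The only mild subtlety — and the step I would be most careful with — is bookkeeping the near-concatenation: when blocks merge, it is the \emph{sum} $\beta_a + \cdots + \beta_b$ that appears as a single component, which is exactly why the coarsening operation in the excerpt is defined via $\pseudoconcat$ rather than ordinary concatenation; I would make sure the indexing of the cut sequence $1 = j_0 < j_1 < \cdots < j_{i+1} = k+1$ matches the runs determined by the complement of $S$ inside $\{1,\ldots,k-1\}$. Everything else is routine: both sums range over the same index set under the bijection, and the monomials agree term by term, so $U^L_T(\x) = \V(\beta,\x) = \V(\Phi(T),\x)$, completing the proof.
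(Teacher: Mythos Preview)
Your proposal is correct and follows essentially the same approach as the paper: both set up the natural bijection between subsets $A\supseteq L(T)$ (equivalently, subsets of the $k-1$ spine edges) and coarsenings $\gamma\succeq\beta$, and check that $\lambda(A)=\lambda(\gamma)$ under this correspondence. The only cosmetic difference is that the paper indexes by the spine edges \emph{not} in $A$ (the cut positions) whereas you index by those \emph{in} $A$; the arguments are otherwise identical.
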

\begin{proof}
Fix $\beta\in\Phi(T)$ and an orientation of the spine $P(T)=v_1\ldots v_n$ such that sequence of the number of vertices of the connected components of $T|_{L(T)}$ 
coincides with $\beta$. We will establish a correspondence between compositions $\alpha\succeq \beta$ and sets $A\subseteq E(T)$ containing $L(T)$ that satisfy the relation $\lambda(A)=\lambda(\beta)$. Indeed, suppose that $A\subseteq E(T)$ contains $L(T)$. Then the edges in $E\setminus A$ are all internal, which means that $E\setminus A = \{v_{j_1}v_{j_1+1},v_{j_2}v_{j_2+1},\ldots,v_{j_k}v_{j_k+1}\}$ with  $j_1<j_2<\ldots < j_k$. By defining 
\[\alpha(A)=|\beta_{1}\cdots\beta_{j_1}||\beta_{j_1+1}\cdots\beta_{j_2}|\ldots|\beta_{j_{k-1}+1}\cdots\beta_{j_k}||\beta_{j_k+1}\cdots\beta_{n}|,\]
it is clear that $\alpha(A)\succeq \beta$ and  $\lambda(\alpha(A)) = \lambda(A)$. Conversely, if $\alpha\succeq \beta$, then by definition, there exist $1=j_0<j_1<j_2<\ldots<j_i<j_{i+1}=\largo{\beta}+1$ such that 
\[\alpha=\beta_{j_0}\cdots\beta_{j_1-1}\pseudoconcat\beta_{j_1}\cdots\beta_{j_2-1}\pseudoconcat\ldots\pseudoconcat\beta_{j_{i-1}}\cdots\beta_{j_i-1}\pseudoconcat\beta_{j_i}\cdots\beta_{j_{i+1}-1}.\]
By defining 
\[A(\alpha) = L(T) \cup \{v_{j_1-1}v_{j_1},v_{j_2-1}v_{j_2},\ldots,v_{j_i-1}v_{j_i}\},\]
we check that $\lambda(A(\alpha))=\lambda(\alpha)$. It is left to the reader to check that
$A(\alpha(A)) = A$ and $\alpha(A(\alpha))=\alpha$. Finally, by using the last correspondence, we get
\[ U^L_T(\x) = \sum_{A \subseteq E(T), L(T)\subseteq A} \x_{\lambda({A})} = \sum_{A \subseteq E(T), L(T)\subseteq A} \x_{\lambda({\seq{v(A)}})} = \sum_{\alpha\succeq \beta} \x_{\lambda(\alpha)}
=\V(\beta,\x).\]
\end{proof}
The next corollary follows direct from Proposition \ref{ULequalsV} and Proposition \ref{l:ugorrofromu}.
\begin{corollary}
\label{UimpliesL}
Let $T$ and $T'$ be two proper caterpillars with the same $U$-polynomial. Then, $\Phi(T)$ and $\Phi(T)'$ have the same $\V$-polynomial.
\end{corollary}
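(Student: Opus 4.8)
The plan is simply to chain together the two results just proved, so the argument is essentially a bookkeeping exercise. First I would invoke Proposition \ref{ULequalsV} applied to both caterpillars: since $T$ and $T'$ are proper, $\V(\Phi(T),\x) = U^L_T(\x)$ and $\V(\Phi(T'),\x) = U^L_{T'}(\x)$. Thus it suffices to prove that the $U^L$-polynomials of $T$ and $T'$ coincide.

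Next, I would use Proposition \ref{l:ugorrofromu}. Because $T$ is proper, $U^L_T$ does not depend on $x_1$, so $U^L_T(\x) = U^L_T(x_1 = 0, x_2, x_3, \ldots)$; combining this with \eqref{eq:UevalUL} gives $U^L_T(\x) = U_T(x_1 = 0, x_2, x_3, \ldots)$. The identical reasoning applied to $T'$ yields $U^L_{T'}(\x) = U_{T'}(x_1 = 0, x_2, x_3, \ldots)$. Finally, the hypothesis $U_T = U_{T'}$ gives in particular $U_T(x_1 = 0, x_2, \ldots) = U_{T'}(x_1 = 0, x_2, \ldots)$, and stringing the displayed equalities together produces $\V(\Phi(T),\x) = \V(\Phi(T'),\x)$.

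There is no real obstacle here: all the substance is already contained in Propositions \ref{l:ugorrofromu} and \ref{ULequalsV}. The only point that deserves a moment's attention is the correct use of the word \emph{evaluation} in Proposition \ref{l:ugorrofromu}: the passage from $U_T$ to $U^L_T$ proceeds through the substitution $x_1 = 0$, and this loses no information precisely because properness of \emph{both} $T$ and $T'$ guarantees that $U^L_T$ and $U^L_{T'}$ are already free of $x_1$. I would make this observation explicit so that the reader sees that the equality of the full $U$-polynomials genuinely descends to the equality of the $x_1$-free $U^L$-polynomials, and hence to the $\V$-polynomials.
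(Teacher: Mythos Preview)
Your proposal is correct and matches the paper's approach exactly: the paper simply states that the corollary follows directly from Proposition~\ref{ULequalsV} and Proposition~\ref{l:ugorrofromu}, and you have spelled out precisely that chain of equalities. Your added remark about why the substitution $x_1=0$ loses no information for proper caterpillars is a helpful clarification of what the paper leaves implicit.
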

We say that  $\alpha\sim_\V \beta$  if $\V(\alpha,\x)=\V(\beta,\x)$ and denote by $[\beta]_\V=\{\alpha\in\mathcal{C}\mid \beta\sim_\V \alpha\}$ the corresponding \emph{$\V$-class}. A composition $\beta$ is \emph{$\V$-unique} if and only if $[\beta]_\V = [\beta]_*$. 
\begin{corollary}
\label{thm.V-unico}
Let $T$ be a proper caterpillar and $\beta\in\Phi(T)$. Suppose that $\beta$ is $\V$-unique. Then, $T$ is $U$-unique.
\end{corollary}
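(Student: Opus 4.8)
The plan is to run in reverse the chain of implications already assembled in the paper, the only substantial new point being that any tree sharing the $U$-polynomial of a proper caterpillar is again a proper caterpillar. By the reduction noted in Section~\ref{sec.applications} (a tree $T$ is $U$-unique if and only if its $U$-polynomial differs from that of every non-isomorphic \emph{tree}), it is enough to show that every tree $T'$ with $U_{T'}=U_T$ is isomorphic to $T$. Fix such a $T'$, and write $n=|V(T)|$ and $k=\largo{\beta}$, so that $\beta\in\mathcal{C}_n$ has length $k$ and $T$ has $n-k$ leaves.

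\emph{Step 1: $T'$ is a proper caterpillar.} First, in any tree on $n$ vertices the coefficient of $x_{n-1}x_1$ in the $U$-polynomial counts exactly the leaf-edges; hence $T'$ also has $n-k$ leaves, i.e.\ exactly $k$ internal vertices. Second, by Propositions~\ref{l:ugorrofromu} and~\ref{ULequalsV} we have $U_{T'}(x_1=0,x_2,x_3,\ldots)=U_T(x_1=0,x_2,x_3,\ldots)=\V(\beta,\x)$, while on the other hand $U_{T'}(x_1=0,x_2,x_3,\ldots)=\sum_{A}\x_{\lambda(A)}$, the sum running over those $A\subseteq E(T')$ for which $T'|_A$ has no isolated vertex. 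In $\V(\beta,\x)$ every monomial uses at most $k$ variables, the only monomial using exactly $k$ of them is $\x_{\lambda(\beta)}$ (it comes from $\alpha=\beta$ alone, with coefficient $1$), and all its parts are $\ge2$; matching this against the other expression forces $V(T')$ to be partitioned into $k$ connected blocks of size $\ge2$. Since $T'$ has only $k$ internal vertices and each such block contains at least one of them, every block is a star centred at a distinct internal vertex. In particular every internal vertex of $T'$ has a leaf-neighbour and the only such $A$ is $L(T')$, so $U_{T'}(x_1=0,x_2,x_3,\ldots)$ depends only on the tree $D$ spanned by the internal vertices of $T'$, weighted by $w(v)=1+\#\{\text{leaves adjacent to }v\}$, namely it equals $\sum_{S\subseteq E(D)}\x_{\mu(S)}$, where $\mu(S)$ is the partition whose parts are the $w$-sums over the connected components of $D|_S$. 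The remaining point --- and the heart of the matter --- is to deduce from $\sum_{S}\x_{\mu(S)}=\V(\beta,\x)$ that $D$ is a \emph{path}: this is a rigidity statement of the same kind the rest of the paper is devoted to, which one would prove by comparing the two polynomials stratum by stratum in the number of parts (the strata with $k$ and $k-1$ parts already recover the multiset of vertex weights and of weighted edges of $D$, and the lower strata rule out branching). Alternatively, one may invoke the known fact that the chromatic symmetric function of a tree determines whether the tree is a caterpillar. Either way, $D$ is a weighted path, so $T'$ is a caterpillar, and it is proper because every internal vertex has a leaf-neighbour.

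\emph{Step 2: conclusion.} Now $T$ and $T'$ are proper caterpillars with the same $U$-polynomial, so Corollary~\ref{UimpliesL} yields $\V(\Phi(T'),\x)=\V(\Phi(T),\x)=\V(\beta,\x)$; hence every composition lying in $\Phi(T')$ belongs to $[\beta]_\V$. Since $\beta$ is $\V$-unique, $[\beta]_\V=[\beta]^*$, and therefore $\Phi(T')=[\beta]^*=\Phi(T)$. Finally, $\Phi$ is one-to-one by Lemma~\ref{lema.VimplicaU}, so $T'\cong T$; this shows that $T$ is $U$-unique.

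I expect the main obstacle to be the rigidity step inside Step~1: reading off the leaf-count and the star structure from $U_{T'}=U_T$ is routine, but showing that a \emph{branching} weighted tree $D$ cannot reproduce $\V(\beta,\x)$ as $\sum_{S}\x_{\mu(S)}$ --- equivalently, that the $\V$-type polynomial of a weighted tree remembers whether the underlying tree is a path --- is precisely the combinatorial rigidity phenomenon around which the paper is organized, and it is where the real work lies.
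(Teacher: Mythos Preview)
Your Step~2 and the overall logic coincide with the paper's proof. For Step~1, the paper does \emph{not} attempt the direct rigidity argument you sketch for the weighted internal tree $D$; it simply invokes the result of Martin, Morin and Wagner \cite{Martin08} that the property of being a caterpillar is detectable from the $U$-polynomial --- precisely the ``alternative'' you mention in passing. Properness of $T'$ is then read off from $[\x_{\lambda(\beta)}]U_T=1$, which is essentially your star-block observation (a subset $A$ realising $\lambda(\beta)$ must split the $k$ internal vertices into $k$ components, each carrying at least one leaf), only argued after $T'$ is already known to be a caterpillar.

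So with the citation to \cite{Martin08} in place, your proof is complete and matches the paper's. The self-contained rigidity step you correctly flag as the ``main obstacle'' --- showing that $\sum_{S\subseteq E(D)}\x_{\mu(S)}=\V(\beta,\x)$ forces $D$ to be a path --- is not carried out in the paper either; the paper sidesteps it entirely by outsourcing the caterpillar recognition to \cite{Martin08}. Your extra work (leaf count, star structure, weighted-tree reformulation) is correct but ultimately unnecessary once that citation is accepted.
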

\begin{proof}
Suppose $T'$ is a tree such that $U(T,\x) = U(T',\x)$. In  \cite{Martin08}, it is proved that whether a tree is a caterpillar or not can be recognized from $U$. Thus, since $T$ and $T'$ have the same $U$-polynomial, 
and from $[\x_{\lambda(\beta)}]U_T = 1$, we can recognize that $T$' must also be a proper caterpillar. Let $\alpha\in\Phi(T')$. It follows from Proposition \ref{l:ugorrofromu} and Lemma \ref{lema.VimplicaU} that $\V(\alpha,\x)=\V(\beta,\x)$. The $\V$-uniqueness
of $\beta$ now implies that $\alpha\sim^* \beta$. Thus, $T'$ is isomorphic to $T$ and $T$ is $U$-unique.
\end{proof}

\section{Proof of the main result}
\subsection{Description of compositions with the same $\V$-polynomial}
%\subsection{Definitions and results}
\label{sec.def}

%\begin{example}
%Suppose  $\alpha = 1^n$. Since  every sequence  $\beta$ of size  $n$ satisfies $\beta%\succeq \alpha$, we have 
%\[\V(\alpha,\x) = \sum_{\lambda \vdash n} c_\lambda \x_\lambda,\]
%where $c_\lambda$ denotes the number of sequences of type $\lambda$.
%\end{example}
%interested in characterizing compositions with the same composition-lattice polynomial. First, observe that $\V(\beta,\x)=\V(\reverse{\beta},\x)$ for every composition $\beta\in \mathcal{C}$. Accordingly,

It is easy to see that $\V$-polynomial of a composition $\beta$ is equivalent to the multiset of partitions coarsenings of $\beta$
\[\mathcal{M}(\beta)=\{ \lambda(\alpha)\mid \alpha\succeq \beta\},\]
introduced in \cite{Billera}. Since the class of compositions that have the same multiset of partition coarsenings has been completely described in \cite{Billera}, we get a complete description of the $\V$-class of a given composition. We recall now this description. When possible, we follow the notation in \cite{Billera}.
%We say that a family of compositions is \emph{$\V$-unique} if each composition in the family is $\V$-unique. We have the following result.
\begin{comment}
\begin{result} \label{theo:unique}
The following families are $\V$-unique:
\begin{enumerate}[(i)]
\item compositions of length smaller than five;
\item palindromes;
\item compositions without repeated-elements.
\end{enumerate}
\end{result}
%agh:  analog -> analogous
We remark that Result \ref{theo:unique}, items (ii) and (iii) are analogous to results in \cite{Martin08} where the same families are shown to be distinguished by the symmetric chromatic function. Indeed, combining Result \ref{theo:unique} with Result \ref{res.V-unico} below, we re-obtain the results of \cite{Martin08} (see Corollary \ref{thm.familias-u-unicas} for more details).
\end{comment}

%agh:  That -> which
%Next, we recall the definition of a product of compositions given in \cite{Billera}:
For convenience we write 
 \[\alpha^{\odot i}:=\underbrace{\alpha\pseudoconcat\alpha\pseudoconcat \ldots\pseudoconcat\alpha}_{i\text{ times}},\]
where $\alpha$ is a composition and $i$ is a positive integer. Given $\alpha\models n$ and $\beta\models m$, the composition 
$\beta\circ \alpha$ is defined by 
\[  \beta\circ \alpha = \alpha^{\odot {\beta_1}} \concat \alpha^{\odot{\beta_2}} \cdots  \alpha^{\odot{\beta_k}},\]
where $\largo{\beta}=k$. 
It is clear that $\beta\circ\alpha\models n m$. If a composition $\alpha$ is written  in the form $\alpha_1\circ\alpha_2\circ\cdots\circ\alpha_k$, then we call this a \emph{factorization} of $\alpha$. A factorization of $\alpha=\beta\circ\gamma$ is called \emph{trivial}
if any of the following conditions are satisfied:
\begin{enumerate}
\item one of the $\beta$,$\gamma$ is the composition $1$,
\item the compositions $\beta$ and $\gamma$ both have length $1$, 
\item the compositions $\beta$ and $\gamma$ both have all components equal to $1$. 
\end{enumerate}
A factorization $\alpha=\alpha_1\circ\alpha_2\circ\cdots\circ\alpha_k$ is \emph{irreducible} if no $\alpha_i\circ\alpha_{i+1}$ is a trivial factorization, and each $\alpha_i$ admits only trivial factorizations. In this case, each $\alpha_i$ is called an \emph{irreducible factor}. 
%implies either $\gamma=1$ or $\alpha=1$. Observe that compositions of length one are irreducible if and only if they are prime numbers. 
\begin{theorem}[{\cite[Theorem 3.6]{Billera}}]
\label{star_basic_prop} 
Each composition admits a unique irreducible factorization.
\end{theorem}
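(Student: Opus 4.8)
The plan is to prove uniqueness of the irreducible factorization of a composition $\alpha$ under the operation $\circ$ by exploiting the structural behavior of $\circ$ with respect to size and length. First I would record the multiplicative identities $|\beta \circ \gamma| = |\beta|\,|\gamma|$ and $\largo{\beta \circ \gamma} = \largo{\beta} + (\largo{\gamma}-1)|\beta|$, together with associativity of $\circ$ (which is implicit in writing unparenthesized factorizations $\alpha_1 \circ \cdots \circ \alpha_k$ and must be verified from the definition). These identities show that, given a factorization into irreducibles, the sizes of the factors are a multiplicative decomposition of $|\alpha|$, and they constrain how the factors can be grouped; this is the backbone of the argument.

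Next I would argue existence of \emph{some} irreducible factorization: starting from $\alpha$ itself, repeatedly replace any factor that admits a nontrivial factorization by that factorization, and merge away any trivial factorizations $\alpha_i \circ \alpha_{i+1}$ that appear (using the fact that $1$ is a two-sided identity for $\circ$, that two length-one compositions compose to a length-one composition, and that two all-ones compositions compose to an all-ones composition). Since each genuine splitting strictly decreases $|\alpha_i|$ or $\largo{\alpha_i}$ for the factor being split, the process terminates, yielding an irreducible factorization. The substance is uniqueness: suppose $\alpha = \alpha_1 \circ \cdots \circ \alpha_r = \gamma_1 \circ \cdots \circ \gamma_s$ are two irreducible factorizations. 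I would induct on $|\alpha|$ (or on $\largo{\alpha}$). Comparing the two factorizations, one shows that the leftmost factors $\alpha_1$ and $\gamma_1$ must be "compatible" — one is obtained from the other by a further $\circ$-factorization — and then irreducibility of each forces $\alpha_1 = \gamma_1$ up to the unavoidable ambiguity coming from trivial factorizations (e.g. $a \circ b = ab$ when $a,b$ have length one, or the all-ones case). Here the case analysis built into the definition of "trivial" is exactly what makes the leftmost factors agree once the degenerate possibilities are excluded by irreducibility; one then left-cancels $\alpha_1 = \gamma_1$ (using that $\circ$ admits left cancellation, which again follows from the size/length identities) and applies the induction hypothesis to $\alpha_2 \circ \cdots \circ \alpha_r = \gamma_2 \circ \cdots \circ \gamma_s$.

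The main obstacle I expect is the bookkeeping around the three types of trivial factorization: the operation $\circ$ is not cancellative in general (the composition $1$ and, more subtly, the interaction of all-ones compositions with length-one compositions create genuine collisions), so the "leftmost factors agree" step requires carefully ruling out each degenerate configuration using irreducibility, rather than a clean algebraic cancellation. In other words, the hard part is not the multiplicative structure on sizes — that is routine — but verifying that after stripping away the three declared trivialities the monoid $(\mathcal{C},\circ)$ is free on its irreducibles, which is precisely the content of \cite[Theorem 3.6]{Billera}; I would therefore lean on the argument there, presenting the size/length identities and the termination argument in detail and citing \cite{Billera} for the delicate uniqueness case analysis. (Indeed, since the statement is quoted verbatim as \cite[Theorem 3.6]{Billera}, the cleanest honest option is to state that the proof is given there and only sketch the above skeleton.)
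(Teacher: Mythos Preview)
The paper does not prove this theorem at all: it is stated purely as a citation of \cite[Theorem~3.6]{Billera}, with no argument or sketch accompanying it. Your proposal---sketching the size/length identities, the termination argument for existence, and then deferring the delicate uniqueness case analysis to \cite{Billera}---already goes beyond what the paper offers, and your closing remark that ``the cleanest honest option is to state that the proof is given there'' is exactly what the paper does. So there is nothing to compare against; if you want to match the paper, simply cite \cite{Billera} and move on.
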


Let $\alpha\models n$ and $\alpha=\alpha_1\circ\alpha_2\circ\cdots\circ\alpha_k$ be the unique irreducible factorization of $\alpha$. Let $id$ denote the identity map in $\mathcal{C}$ and $R$ denote the reverse map, that is, $R(\alpha)=\reverse{\alpha}$ for all $\alpha\in\mathcal{C}$. The \emph{symmetry-class} of 
$\beta$ is defined by   
\[\operatorname{Sym}(\alpha):=\big\{T_1(\alpha_1)\circ T_2(\alpha_2)\circ\cdots\circ T_k(\alpha_k)\mid T_i\in\{id,R\} \text{ for all $i\in\{1,\ldots,l\}$}\big\}.\]  

\begin{theorem}[{\cite[Corollary 4.2]{Billera}}]
\label{prop.SymClass}
For every composition $\alpha$ we have 
\[[\alpha]_\V=\operatorname{Sym}(\alpha).\] 
\end{theorem}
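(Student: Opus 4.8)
Since this statement is quoted verbatim from \cite[Corollary~4.2]{Billera}, the natural proposal is to reproduce their argument, translated through the observation (made just before the statement) that $\V(\beta,\x)$ is equivalent to the coarsening multiset $\mathcal{M}(\beta)=\{\lambda(\eta)\mid \eta\succeq\beta\}$. So the task is to show $\mathcal{M}(\beta)=\mathcal{M}(\alpha)$ if and only if $\beta\in\operatorname{Sym}(\alpha)$. The plan is to prove the two inclusions separately, the first being essentially formal and the second carrying all the weight.

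\emph{The inclusion $\operatorname{Sym}(\alpha)\subseteq[\alpha]_\V$.} It suffices to show that $\mathcal{M}$ is unchanged when one reverses a single irreducible factor of $\alpha=\alpha_1\circ\cdots\circ\alpha_k$. I would first record two elementary facts: (a) $\mathcal{M}(\reverse\delta)=\mathcal{M}(\delta)$ for every composition $\delta$ (immediate, since $\eta\succeq\delta \iff \reverse\eta\succeq\reverse\delta$ and $\lambda(\eta)=\lambda(\reverse\eta)$); and (b) the identity $\reverse{(\beta\circ\gamma)}=\reverse\beta\circ\reverse\gamma$, together with associativity of $\circ$. The crux of this direction is a \textbf{substitution lemma}: $\mathcal{M}(\beta\circ\gamma)$ depends only on $\mathcal{M}(\beta)$ and $\mathcal{M}(\gamma)$. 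To prove it, I would analyze how an arbitrary coarsening $\eta\succeq\beta\circ\gamma$ interacts with the concatenation points that split $\beta\circ\gamma$ into the blocks $\gamma^{\odot\beta_1},\dots,\gamma^{\odot\beta_k}$: some of these points are preserved by $\eta$ and some are crossed, and tracking the resulting partition types yields a formula for $\mathcal{M}(\beta\circ\gamma)$ in terms of the two smaller multisets. Granting (a), (b) and the substitution lemma, reversing $\alpha_j$ while fixing the other factors leaves $\mathcal{M}(\alpha)$ invariant, and iterating gives $\operatorname{Sym}(\alpha)\subseteq[\alpha]_\V$.

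\emph{The inclusion $[\alpha]_\V\subseteq\operatorname{Sym}(\alpha)$.} The convenient model is to identify $\beta=\beta_1\cdots\beta_\ell\models n$ with the partial-sum set $S_\beta=\{\beta_1,\,\beta_1+\beta_2,\,\dots,\,\beta_1+\cdots+\beta_{\ell-1}\}\subseteq\{1,\dots,n-1\}$, so that the coarsenings of $\beta$ are exactly the gap-compositions of the subsets of $S_\beta$ and $\mathcal{M}(\beta)$ is the multiset of their types. Given $\beta$ with $\mathcal{M}(\beta)=\mathcal{M}(\alpha)$, I would induct on the number $k$ of irreducible factors of $\alpha$ (using Theorem~\ref{star_basic_prop} throughout). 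In the base case $\alpha$ is irreducible, and one must show directly that the only compositions with coarsening multiset $\mathcal{M}(\alpha)$ are $\alpha$ and $\reverse\alpha$; this is where one extracts the linear order of $\alpha$ (up to reversal) from the \emph{unordered} data $\mathcal{M}(\alpha)$, by extremal arguments about which partitions occur as gap-types of subsets of $S_\alpha$ and with which multiplicities. For the inductive step, write $\alpha=\alpha_1\circ\alpha'$; combining the substitution lemma with a cancellation property of $\circ$ for $\mathcal{M}$ (if $\mathcal{M}(\eta\circ\mu)=\mathcal{M}(\eta\circ\mu')$ with $\eta$ fixed, then $\mu$ and $\mu'$ agree up to the allowed symmetries), one recovers $\alpha_1$ up to reversal, peels it off, and applies the induction hypothesis to finish.

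\emph{Main obstacle.} The genuinely hard part is the second inclusion, and within it the base case together with the cancellation property: one has to reconstruct ordered information (the irreducible factorization, up to reversing factors) from the symmetric function $\V$, which only remembers multisets of partition types of coarsenings. This reconstruction is precisely the content of Sections~3 and 4 of \cite{Billera}, and I would expect to follow their extremal combinatorial analysis of gap-types rather closely. By contrast, the substitution lemma and the reversal identities used in the first inclusion are technical but essentially bookkeeping.
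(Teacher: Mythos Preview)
The paper gives no proof of this theorem at all: it is imported verbatim from \cite[Corollary~4.2]{Billera} and used as a black box, so your proposal to defer to (and sketch) the argument of \cite{Billera} is exactly the paper's own ``approach.'' Your outline is a fair high-level summary of that argument---correctly isolating the substitution behaviour of $\mathcal{M}$ under $\circ$ for the easy inclusion and the reconstruction of the irreducible factorization from $\mathcal{M}(\alpha)$ for the hard one---though of course the actual work (your base case and cancellation step) is precisely the nontrivial content of \cite[\S\S3--4]{Billera} rather than something one fills in on the spot.
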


\subsection{Caterpillars with distinct $U$-polynomials}

In this section, we give a very general sufficient condition for two proper caterpillars to have distinct $U$-polynomials. First, we need some notation.

Recall that $\alpha$ is lexicographically less than $\beta$, denoted $\alpha<_L\beta$, if one of the two conditions hold:
\begin{enumerate}
\item $\largo{\alpha}<\largo{\beta}$ and $\alpha_i=\beta_i$ for all $i\in\{1,\ldots,\largo{\alpha}$,
\item There exists $k\in\{1,\ldots,\largo{\alpha}$ such that $\alpha_k < \beta_k$ and $\alpha_i=\beta_i$ for all $i\in\{1,\ldots,k-1\}$.
\end{enumerate} 
Given two compositions $\alpha\neq \beta$ of the same length, let 
\begin{equation}
\label{def:k}
k(\alpha,\beta):=\min\{1\leq k\leq  \largo{\alpha} \mid \alpha_k\neq \beta_k\}
\end{equation}
denote the index where the first difference (from left to right) between $\alpha$ and $\beta$ appears. A composition $\beta$ is a \emph{prefix} of another composition $\gamma$ if there exists a composition $\alpha$ such that $\gamma = \beta\cdot\alpha$. Acordingly, $\beta$ is a suffix of $\gamma$ if there exists $\alpha$ such that $\gamma=\alpha\cdot\beta$.
\begin{theorem}
\label{t.condsufic}
Suppose $S$ and $T$ are two proper caterpillars such that $\Phi(S)=[\alpha\circ\gamma]^*$ and $\Phi(T)=[\beta\circ\gamma]^*$, where $\alpha,\beta$ and $\gamma$ belong to $\mathcal{C}$, and $\alpha$ and $\beta$ have the same size. If $\gamma$ is not a palindrome and $\alpha\neq\beta$, then the $U$-polynomials of $S$ and $T$ are distinct.
\end{theorem}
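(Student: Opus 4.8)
The plan is to reduce the comparison of $U_S$ and $U_T$ to a combinatorial statement about the sequences of partial sums of the compositions $\alpha\circ\gamma$ and $\beta\circ\gamma$, after first disposing of an easy case using the restricted polynomial $U^L$. Throughout write $q=\largo{\gamma}$, $g=\norma{\gamma}$, and $G_j=\gamma_1+\cdots+\gamma_j$. First, some harmless reductions: since $\gamma$ is not a palindrome, $q\geq 2$; since $S$ is proper and $\alpha\circ\gamma=\gamma^{\odot\alpha_1}\concat\cdots\concat\gamma^{\odot\alpha_{\largo{\alpha}}}$, every component of $\gamma$ is at least $2$; and the hypotheses $\alpha\neq\beta$, $\norma{\alpha}=\norma{\beta}$ already rule out $\alpha=1$ or $\beta=1$ (the only composition of size $1$ is $1$). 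Because $U^L$ is an evaluation of $U$ for proper caterpillars (Proposition~\ref{l:ugorrofromu}) and $U^L_S=\V(\alpha\circ\gamma,\x)$, $U^L_T=\V(\beta\circ\gamma,\x)$ (Proposition~\ref{ULequalsV}), it suffices in each of the two cases below to exhibit a monomial on which $U_S,U_T$ — or already $U^L_S,U^L_T$ — disagree.

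\emph{Case $\largo{\alpha}\neq\largo{\beta}$.} Here $\largo{\alpha\circ\gamma}=\norma{\alpha}(q-1)+\largo{\alpha}\neq\norma{\beta}(q-1)+\largo{\beta}=\largo{\beta\circ\gamma}$ since $\norma{\alpha}=\norma{\beta}$; say $\largo{\alpha\circ\gamma}>\largo{\beta\circ\gamma}$ (otherwise swap $S$ and $T$). Then $[\x_{\lambda(\alpha\circ\gamma)}]\V(\alpha\circ\gamma,\x)=1$, because any coarsening of $\alpha\circ\gamma$ with the type of $\alpha\circ\gamma$ has $\largo{\alpha\circ\gamma}$ parts and hence equals $\alpha\circ\gamma$; on the other hand $[\x_{\lambda(\alpha\circ\gamma)}]\V(\beta\circ\gamma,\x)=0$, because every coarsening of $\beta\circ\gamma$ has at most $\largo{\beta\circ\gamma}<\largo{\alpha\circ\gamma}$ parts whereas $\lambda(\alpha\circ\gamma)$ has $\largo{\alpha\circ\gamma}$ of them. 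So $U^L_S\neq U^L_T$, and hence $U_S\neq U_T$.

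\emph{Case $\largo{\alpha}=\largo{\beta}$.} Now $U^L_S$ and $U^L_T$ may coincide, so the full $U$-polynomial is needed. For a caterpillar with composition $\delta$ on $N$ vertices I would use the subtree-counting coefficients $c_m:=[\x_{m\,1^{N-m}}]U$: for $m\geq 2$ one has $c_m=\sum_{1\le a\le b\le\largo{\delta}}\binom{\sigma([a,b])-(b-a+1)}{m-(b-a+1)}$, where $\sigma([a,b])=\delta_a+\cdots+\delta_b$, since an $m$-vertex subtree is a nonempty spine-interval $[a,b]$ together with a choice of $m-(b-a+1)$ of its pendant leaves. As the polynomials $q^{\lambda}(1+q)^{\mu}$ ($\lambda,\mu\geq 0$) are linearly independent, the family $\{c_m\}$ is equivalent to the multiset $\{(b-a+1,\ \sigma([a,b]))\}_{1\le a\le b\le\largo{\delta}}$, hence to the increasing sequence of partial sums $0=P_0<P_1<\cdots<P_{\largo{\delta}}=N$ of $\delta$. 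So it suffices to show that $\alpha\circ\gamma$ and $\beta\circ\gamma$ have distinguishable partial-sum sequences. The structural lemma here is that the set of partial sums of $\alpha\circ\gamma$ is the \emph{disjoint} union $F\sqcup g\cdot\operatorname{PS}(\alpha)$, where $F=\{sg+G_j:0\le s\le\norma{\alpha}-1,\ 1\le j\le q-1\}$ does not depend on $\alpha$ and $\operatorname{PS}(\alpha)=\{0,\alpha_1,\alpha_1+\alpha_2,\dots,\norma{\alpha}\}$; the union is disjoint because $0<G_j<g$. Consequently the partial sums of $\alpha\circ\gamma$ divisible by $g$ are exactly $g\cdot\operatorname{PS}(\alpha)$, so $\alpha\neq\beta$ forces the two partial-sum sequences to differ: taking the first index $k$ with $\alpha_k\neq\beta_k$, say $\alpha_k>\beta_k$, the sequences agree up to position $(\beta_1+\cdots+\beta_k)(q-1)+k$ and the value for $S$ is strictly larger there.

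The step I expect to be the main obstacle is the final one — converting this difference of partial-sum sequences into a difference of some $c_m$, i.e.\ showing that the multiset of pairs $\{(\text{length},\text{sum})\}$ of sub-intervals of $F\sqcup g\operatorname{PS}(\alpha)$ determines $\operatorname{PS}(\alpha)$. This is precisely where non-palindromicity of $\gamma$ is used and cannot be dropped: if $\gamma$ were a palindrome, then $F$ would be invariant under $x\mapsto N-x$, and since the $c_m$ are reversal-invariant they could not separate $\operatorname{PS}(\alpha)$ from $\operatorname{PS}(\reverse{\alpha})$ — consistent with the fact that for palindromic $\gamma$ the statement fails, as $\beta=\reverse{\alpha}$ then gives $\Phi(S)=\Phi(T)$. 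When $\gamma$ is not a palindrome the reflection $x\mapsto N-x$ genuinely moves $F$, and the plan is to exploit that asymmetry — anchoring sub-intervals against the portion of $F$ not fixed by $x\mapsto N-x$ — to read $g\cdot\operatorname{PS}(\alpha)$, hence $\alpha$, off the subtree polynomial; should this bookkeeping not close the argument on its own, one can bring in further coefficients $[\x_\mu]U$ with $\mu$ having two nonunit parts, which record pairs of disjoint sub-intervals and are strictly more sensitive.
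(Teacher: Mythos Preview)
Your Case~1 is correct. In Case~2 there is a real error, prior to the ``main obstacle'' you flag at the end: the polynomials $q^{\lambda}(1+q)^{\mu}$ are \emph{not} linearly independent. Already $1-(1+q)+q=0$, and even under the restriction $\mu\ge\lambda\ge1$ forced by properness (since $\sigma([a,b])\ge 2\ell$) one has $q(1+q)^{3}=q^{2}(1+q)^{2}+q(1+q)^{2}$. Hence the generating function $\sum_m c_m q^m=\sum_{a\le b}q^{\ell}(1+q)^{\sigma([a,b])-\ell}$ does not determine the multiset $\{(\ell,\sigma([a,b]))\}$, and the chain ``$\{c_m\}\Rightarrow$ multiset of $(\text{length},\text{sum})\Rightarrow$ partial-sum sequence $\Rightarrow\alpha$'' breaks at the first arrow. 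The second arrow is likewise asserted without proof (reconstructing a point set from its labelled pairwise differences is a turnpike-type problem with no general uniqueness), and your final paragraph effectively retracts both by downgrading the argument to a plan. As written, Case~2 is not a proof.

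The paper's route is much more direct: it names a single coefficient on which $U_S$ and $U_T$ provably differ. Arrange $\gamma<_L\reverse{\gamma}$ and $\alpha<_L\beta$; let $k$ be the first index with $\alpha_k\neq\beta_k$ and $l$ the first index with $\gamma_l\neq(\reverse{\gamma})_l$, and set $a=\norma{\alpha_{1,k}}$, $b=\norma{\gamma_{1,l}}$, $\delta_1=a\norma{\gamma}+b$, $\delta_2=\norma{\alpha}\,\norma{\gamma}-\delta_1$. The choice of $b$ --- which exists exactly because $\gamma$ is not a palindrome --- forces the two-block coarsening $(\delta_1,\delta_2)$ of $\sigma=\alpha\circ\gamma$ (and, by the same argument, of $\tau=\beta\circ\gamma$) to be realised in a unique way, by the prefix $\rho_1=(\alpha_{1,k}\circ\gamma)\concat\gamma_{1,l}$ of $\sigma$, respectively $\rho_2=(\alpha_{1,k}\circ\gamma)\pseudoconcat\gamma_{1,l}$ of $\tau$. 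One then computes the coefficient of $\x_{\lambda(1,\delta_1-1,\delta_2)}$ in $U_S$ and in $U_T$: the only contributing edge-sets remove the unique spine edge giving the cut $(\delta_1,\delta_2)$ together with one leaf edge from the size-$\delta_1$ block, so the coefficient equals the number of leaves on that block, namely $N(\rho_i)=\norma{\rho_i}-\largo{\rho_i}$. Since $\rho_2$ uses $\pseudoconcat$ where $\rho_1$ uses $\concat$, one gets $N(\rho_2)=N(\rho_1)+1$. This is precisely a coefficient of the ``two nonunit parts plus a $1$'' type you mention as a fallback in your last sentence --- the difference is that the paper actually carries the computation through.
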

\begin{proof}
To avoid confusions, we sometimes write $(\alpha_1,\alpha_2,\ldots,\alpha_k)$ instead of $\alpha_1\alpha_2\ldots\alpha_k$
for a composition $\alpha$. W.l.o.g we may assume that $\gamma<_L\reverse{\gamma}$ and $\alpha<_L\beta$. Fix $\sigma=\alpha\circ\gamma$, $\tau=\beta\circ\gamma$ and $n=|\alpha|=|\beta|$. 
Fix also $a = |\alpha_{1,k(\alpha,\beta)}|$ and $b=|\gamma_{1,k(\gamma,\reverse{\gamma})}|$, where $k:=k(\alpha,\beta)$ and $k(\gamma,\reverse{\gamma})$ are defined by \eqref{def:k}. Since $\gamma<_L\reverse{\gamma}$, it is easy to  check that $[\x_{(b,|\gamma|-b)}]\V(\gamma) = 1$. Now consider $\delta=\delta_1\delta_2$, where 
\[\delta_1=a|\gamma|+b,\quad \delta_2= n|\gamma| - \delta_1.\]
We show that 
\[[\x_{\lambda(\delta_1,\delta_2)}]\V(\sigma)= 1.\]
Indeed, it is easy to check that $\delta\succeq \sigma$ and $\delta\succeq \tau$ since
the composition 
$\rho_1 =(\alpha_{1,k}\circ\gamma)\concat \gamma_{1,k(\gamma,\reverse{\gamma})}$ is a prefix for $\sigma$ while  $\rho_2 =(\alpha_{1,k}\circ\gamma)\pseudoconcat \gamma_{1,k(\gamma,\reverse{\gamma})}$ is a prefix for $\tau$, and both compositions have size $\delta_1$. Let us now suppose that 
$[\x_{\lambda(\delta_1,\delta_2)}]\V(\sigma)=2$, that is, there is a suffix $\phi$ of $\sigma$ such that $|\phi|=\delta_1$. By the definition of $\sigma$, this would imply the existence of a suffix $\psi$ of $\gamma$ such that $|\psi|=b$, which in turn would yield that $[x_{(b,|\gamma|-b)}]\V(\gamma)=2$, which is a contradiction. Hence, $[\x_{\lambda(\delta_1,\delta_2)}]\V(\sigma)=1$. 

Now we compute  $[\x_\lambda]U_S = \sharp\{ A\subseteq E(S)\mid \lambda(A)=\lambda\}$ for $\lambda=\lambda(1,\delta_1-1,\delta_2)$. 
Indeed, fix $A\subseteq E(S)$ such that $\lambda(A)=\lambda$. Since  $S$ is proper, it follows that $E(S)\setminus A=\{e_1,e_2\}$, where $e_1=v_iv_{i+1}$ is an internal edge and $e_2$ is a leaf.
 This means that $A'=A\cup\{e_2\}$ contains $L(S)$ and either $\lambda(A')$ equals $\{\lambda(\delta_1,\delta_2)\}$ or $\{\lambda(\delta_1-1,\delta_2+1)\}$. Since $A'$ contains all leaves, it corresponds to the sequence $\zeta:=|\sigma_{1,i}||\sigma_{i+1,\largo{\sigma}}|$ and we have $\lambda(\zeta)=\lambda(A')$. Let us see that necessarily $\zeta=\delta$. Indeed, supposing $\zeta=\reverse{\delta}$ implies that $(n-b,b)\succeq \gamma$ which is not possible. Next, supposing $\zeta=(\delta_1-1,\delta_2+1)$ implies, as we already know that $(\delta_1,\delta_2)\succeq\sigma$, that $(\delta_1-1,1,\delta_2)\succeq \sigma$, which is not possible since $S$ is proper. Finally, supposing $(\delta_2+1,\delta_1-1)\succeq\sigma$ implies that $(n-b+1,b-1)\succeq\gamma$. But this is not possible, since by hypothesis, if we let $l=k(\gamma,\reverse{\gamma})$, then we have $|(\reverse{\gamma})_{1,l}|>|\gamma_{1,l}| = b >|(\reverse{\gamma})_{1,l-1}|$ and 
 $|(\reverse{\gamma})_{1,l-1}|= |{\gamma}_{1,l-1}|= b - \gamma_{l} < b - 1$, where the last inequality follows from the fact that $S$ is proper.  Hence we have $\zeta=\delta$. Now, this means that $A'$ is indeed uniquely determined. This implies, in particular, that  
\[[\x_\lambda]U_S = \sharp L(\Psi(\rho_1)).\]

Using a similar argument, it is possible to show that 
\[[\x_\lambda]U_T = \sharp L(\Psi(\rho_2)).\]  

Thus, to finish the proof, it suffices to compute the number of leaves in $\Psi(\rho_1)$ and $\Psi(\rho_2)$ and show they are different. The following lemma allows us to compute the number of leaves in a proper caterpillar. To simplify notation, we say that a composition $\gamma\in\mathcal{C}$ is \emph{proper} if every element in $\gamma$ is larger than one.
 
\begin{lemma}
\label{lemma:leaves}
Suppose $\gamma\in\mathcal{C}$ is proper. Then, 
\[\sharp L(\Psi(\gamma))=|\gamma|-\largo{\gamma}.\]
\end{lemma}
\begin{proof}
Since $\gamma$ is proper, it is easy to see that $\Psi(\gamma)$ is also proper. Moreover,
if $v_1\ldots v_{\largo{\gamma}}$ denotes the spine of $\Psi(\gamma)$, it is direct from the definition of $\Psi$ that the number of leaves incident to $v_i$ is $\gamma_i-1$ for every $i\in\{1,\ldots,\largo{\gamma}\}$. This implies that $\sharp L(\Psi(\gamma))= \sum_i (\gamma_i - 1) =|\gamma|-\largo{\gamma}$, which is the desired conclusion.
\end{proof}
Motivated by Lemma \ref{lemma:leaves}, given $\gamma\in\mathcal{C}$, define $N(\gamma)=|\gamma|-\largo{\gamma}$. The following lemma resumes the properties of $N$.
\begin{lemma}
\label{hojasxy}
Suppose $\gamma$ and $\alpha$ are two proper compositions. Then, the following assertions hold:
\begin{enumerate}[(i)]
\item $N(\gamma\concat\alpha) = N(\gamma) + N(\alpha)$;
\item $N(\gamma\pseudoconcat\alpha) = N(\gamma)+ N(\alpha)+1$;
\item $N(\alpha\circ\gamma)) = N(\gamma)|\alpha| + N(\alpha)$.
\end{enumerate}
\end{lemma}
\begin{proof}
\emph{(i)} and \emph{(ii)} are clear from the definition of $N$. To show \emph{(iii)}, it follows from  \emph{(ii)}  that $N(\gamma^{\odot \alpha_i}) = \alpha_i N(\gamma) + \alpha_i-1$ for every $i\in\{1,\ldots \largo{\alpha}\}$. Since $\alpha\circ\gamma=\gamma^{\odot \alpha_1} \concat \gamma^{\odot  \alpha_2} \ldots \gamma^{\odot \alpha_{\largo{\alpha}}}$ by definition, it follows from \emph{(i)} that 
  \[N(\alpha\circ\gamma) = \sum_{i=1}^{\largo{\alpha}}(\alpha_i N(\gamma) + \alpha_i - 1) = N(\gamma)|\alpha|  +|\alpha| - \largo{\alpha}  = N(\gamma)|\alpha|  +N(\alpha).\]
\end{proof}

Now to finish the proof of the theorem, applying Lemma \ref{hojasxy}
it is easy to check that
\[
N (\rho_2) = N (\rho_1)+1.
\]
Since the sequences $\rho_2$ and $\rho_1$ are proper, it follows from Lemma \ref{lemma:leaves} that $\sharp L(\Psi(\rho_2))$ and $\sharp L(\Psi(\rho_1))$ are distinct. This implies that $[x_\lambda]U_T$ and $[x_\lambda]U_S$ are different and the conclusion now follows.
\end{proof}
\subsection{Proof of main result}
Now we are almost in position to give the proof of our main result. The last result we need is the following:
\begin{proposition}
\label{palindromes}
Suppose that $\beta$ is a palindrome. Then, $\beta$ is $\V$-unique.
\end{proposition}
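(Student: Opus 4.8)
The plan is to reduce the statement to an assertion about the irreducible factorization of $\beta$, exploiting the description of the $\V$-class provided by Theorem~\ref{prop.SymClass}. Since $[\beta]_\V=\operatorname{Sym}(\beta)$ and $\beta$ is a palindrome (so that $[\beta]_*=\{\beta\}$), it suffices to prove that $\operatorname{Sym}(\beta)=\{\beta\}$, and this will follow once we know that every irreducible factor of $\beta$ is itself a palindrome.

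First I would record how the reverse map $R$ interacts with the operations $\pseudoconcat$ and $\circ$. A direct computation from the definitions gives $\reverse{(\gamma\pseudoconcat\delta)}=\reverse{\delta}\pseudoconcat\reverse{\gamma}$, and, together with the standard identity $\reverse{(\gamma\concat\delta)}=\reverse{\delta}\concat\reverse{\gamma}$, an easy induction on $m$ yields $\reverse{(\gamma^{\odot m})}=(\reverse{\gamma})^{\odot m}$ and hence
\[\reverse{(\beta\circ\gamma)}=\reverse{\beta}\circ\reverse{\gamma}\]
for all compositions $\beta,\gamma$. Next I would check that $R$ preserves each of the three conditions defining a trivial factorization: each condition is manifestly invariant under reversing both factors, using $\reverse{1}=1$ and the facts that $R$ preserves length and preserves the property of having all components equal to $1$. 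Consequently $\gamma$ admits only trivial factorizations if and only if $\reverse{\gamma}$ does, and $\gamma_i\circ\gamma_{i+1}$ is a trivial factorization if and only if $\reverse{\gamma_i}\circ\reverse{\gamma_{i+1}}$ is. Combining this with the displayed identity (extended to finitely many factors), we conclude that if $\alpha=\alpha_1\circ\alpha_2\circ\cdots\circ\alpha_k$ is the irreducible factorization of $\alpha$, then $\reverse{\alpha}=\reverse{\alpha_1}\circ\reverse{\alpha_2}\circ\cdots\circ\reverse{\alpha_k}$ is the irreducible factorization of $\reverse{\alpha}$.

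Now I would apply this with $\alpha=\beta$. Writing $\beta=\beta_1\circ\cdots\circ\beta_k$ for the irreducible factorization, palindromicity of $\beta$ gives $\beta=\reverse{\beta}$, so $\beta_1\circ\cdots\circ\beta_k$ and $\reverse{\beta_1}\circ\cdots\circ\reverse{\beta_k}$ are two irreducible factorizations of the same composition. By the uniqueness in Theorem~\ref{star_basic_prop}, $\beta_i=\reverse{\beta_i}$ for every $i$, i.e. each irreducible factor is a palindrome. Hence for any choice of $T_i\in\{id,R\}$ we have $T_i(\beta_i)=\beta_i$, so that $T_1(\beta_1)\circ\cdots\circ T_k(\beta_k)=\beta$; that is, $\operatorname{Sym}(\beta)=\{\beta\}=[\beta]_*$, and Theorem~\ref{prop.SymClass} yields $[\beta]_\V=[\beta]_*$, which is exactly $\V$-uniqueness of $\beta$.

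I expect the only delicate point to be the bookkeeping in the second paragraph: verifying that reversing commutes with $\circ$ (and $\pseudoconcat$) and that it preserves triviality and irreducibility of factorizations. Each of these is routine once the identity $\reverse{(\gamma\pseudoconcat\delta)}=\reverse{\delta}\pseudoconcat\reverse{\gamma}$ is in hand, together with the associativity of $\circ$ established in~\cite{Billera}, which is what makes multi-factor factorizations well defined. No estimates or case analysis beyond this are needed.
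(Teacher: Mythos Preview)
Your proposal is correct and follows essentially the same route as the paper: show that reversing commutes with $\circ$, deduce from uniqueness of the irreducible factorization that a palindrome has all irreducible factors palindromic, and conclude via Theorem~\ref{prop.SymClass} that $\operatorname{Sym}(\beta)=\{\beta\}=[\beta]_*$. The paper's proof is terser, simply asserting the commutation of $R$ with $\circ$ and the palindrome-factor fact as ``easy to check,'' whereas you spell out the verification that $R$ preserves triviality and irreducibility of factorizations; this extra bookkeeping is sound and the arguments match.
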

\begin{proof}
By Theorem \ref{star_basic_prop}, and the fact that the reverse operation commutes with the product $\circ$, it is easy to check that $\beta$ is a palindrome if and only if all their irreducible factors are palindromes. It is easy to see that the symmetry class of an irreducible factor that is also a palindrome is equal to its reverse-class, which is a singleton.
Thus, by Theorem \ref{prop.SymClass}, the $\V$-class of $\beta$ is equal to $\{\beta\}$, which means that $\beta$ is $\V$-unique. 
\end{proof}

\begin{proof}[Proof of Main Result]
Suppose that $T$ and $T'$ are two proper caterpillars with the same $U$-polynomial. We assume 
by contradiction that $T$ and $T'$ are not isomoprhic. 
By Corollary \ref{l:ugorrofromu}, it follows that $\alpha\in\Phi(T)$ and $\beta\in\Phi(T')$ have the same $\V$-polynomial and $\alpha\not\sim^*\beta$. By Theorem \ref{prop.SymClass}, 
we have that $\operatorname{Sym}(\alpha)=\operatorname{Sym}(\beta)$. That is to say, if $\alpha=\alpha_1\circ\alpha_2\circ\cdots\circ\alpha_k$ and 
$\beta=\beta_1\circ\beta_2\circ\cdots\circ\beta_k$ are the irreducible factorizations of $\alpha$ and $\beta$, then 
\[\alpha_i\sim_*\beta_i\quad\text{for all $i\in\{1,\ldots,k\}$}.\]
On the other hand, by Proposition \ref{palindromes}, neither $\alpha$ nor $\beta$ can be palindromes. Hence, there exists $l$ such that $\alpha_l$ is not a palindrome, and $\alpha_i$ is a palindrome for every $i>l$. Moreover, since the product $\circ$ commutes with the reverse operations, we may assume that $\alpha_l = \beta_l$. By setting $\gamma=\alpha_l\circ\alpha_{l+1}\circ\cdots\circ\alpha_k$, it follows that 
\[\alpha=\alpha_1\circ\alpha_2\circ\cdots\circ\alpha_{l-1}\circ\gamma \quad \text{and}\quad \beta=\beta_1\circ\beta_2\circ\cdots\circ\beta_{l-1}\circ\gamma.\]
From this, it is direct to check that
\[\alpha_1\circ\alpha_2\circ\cdots\circ\alpha_{l-1}\neq\beta_1\circ\beta_2\circ\cdots\circ\beta_{l-1}. \]
Hence, by Theorem \ref{t.condsufic}, the $U$-polynomials of $T$ and $T'$
are distinct. This gives a contradiction, and thus concludes the proof. 
\end{proof}

\subsection*{Acknowledgments}
Both authors thank P. McNamara for pointing them to the work of L.Billera, H. Thomas and S. van Willigenburg soon after a previous version of this paper
was submitted to the arXiv. They also thank O. Carton,  E. Friedman, M. Matamala, R. Menares and M. L\"oebl for several useful discussions  about previous versions of this article. They also thank A. Hart for helping them correcting english and grammar errors in this article. 

%\bibliographystyle{abbrv}
%\bibliography{graph_theory}

\end{document}